\documentclass{amsart}
\usepackage{graphicx} 
\usepackage{url, amsthm, amsmath}
\usepackage{amssymb}
\usepackage{appendix}

\newcommand{\boldA}{\mathbf{A}}
\newcommand{\boldB}{\mathbf{B}}
\newcommand{\boldC}{\mathbf{C}}
\newcommand{\boldD}{\mathbf{D}}
\newcommand{\boldE}{\mathbf{E}}

\theoremstyle{plain}
\newtheorem{theorem}{Theorem}[section]
\newtheorem{lemma}[theorem]{Lemma}
\newtheorem{fact}[theorem]{Fact}

\newtheorem{corollary}[theorem]{Corollary}

\newtheorem{proposition}[theorem]{Proposition}

\theoremstyle{definition}
\newtheorem{definition}[theorem]{Definition}
\newtheorem{remark}[theorem]{Remark}

\newcommand{\cstar}{$\mathrm{C}^*$}

\title{Non-computability of $K$-theory for computably presented \cstar-algebras}

\author[C. J. Eagle]{Christopher J. Eagle${}^1$} 
\address[C. J. Eagle]
{Department of Mathematics and Statistics, University of Victoria. PO BOX 1700 STN CSC, Victoria, British Columbia, Canada. V8W 2Y2}%
\email{eaglec@uvic.ca}
\urladdr{http://www.math.uvic.ca/~eaglec}
\thanks{${}^1$ Supported by NSERC Discovery Grant RGPIN-2021-02459}

\author[I. Goldbring]{Isaac Goldbring${}^2$}
\address[I. Goldbring]{University of California, Irvine}
\email{igoldbri@uci.edu}
\urladdr{https://www.math.uci.edu/~isaac}
\thanks{${}^2$  Supported by NSF grant DMS-2054477.}

\author[T. H. McNicholl]{Timothy H. McNicholl}
\address[T. H. McNicholl]{Iowa State University}
\email{mcnichol@iastate.edu}
\urladdr{https://faculty.sites.iastate.edu/mcnichol/}

\author[R. Miller]{Russell Miller${}^3$}
\address[R. Miller]{Queens College, 65-30 Kissena Blvd, Queens, NY 11367 \& CUNY Graduate Center, 365 Fifth Avenue, New York, NY 10016, U.S.A.}
\email{russell.miller@qc.cuny.edu}
\urladdr{https://qcpages.qc.cuny.edu/~rmiller/}
\thanks{${}^3$  Supported by NSF grant DMS-2348891,
by Simons Foundation award \#MP-TSM- 00007933,
and by several grants from the City University of New York PSC-CUNY Research Award Program.\\}

\begin{document}

\begin{abstract}
We give an example of a unital \cstar-algebra $\boldA$ with a computable presentation and for which neither $K_0(\boldA)$ nor $K_1(\boldA)$ has a computable presentation.
\end{abstract}

\maketitle

\section*{Introduction}
In \cite{UHFPaper}, we initiated the study of the effective content of the construction of the K-theory of a \cstar-algebra.  In particular, we constructed a computable functor that takes as input a c.e. presentation of a \cstar-algebra $\boldA$ and outputs a c.e. presentation of the group $K_0(\boldA)$ and another such functor outputting a c.e. presentation of $K_1(\boldA)$.  In some cases, such as when $\boldA$ is an AF algebra, our functor maps computable presentations of $\boldA$ to computable presentations of $K_0(\boldA)$; see \cite[Main Theorem 3]{AFPaper}.  This naturally raises the question of whether there is a \cstar-algebra $\boldA$ such that $\boldA$ has a computable presentation but $K_0(\boldA)$ does not (see \cite[Question 2.17]{UHFPaper}).  In this note, we give an example of a \cstar-algebra $\boldA$ such that $\boldA$ has a computable presentation but neither $K_0(\boldA)$ nor $K_1(\boldA)$ has a computable presentation.  We refer the reader to \cite{UHFPaper} for background on computability theory for \cstar-algebras and the computable $K_0$ and $K_1$ functors.

\section{A group with no computable presentation}
Fix, for the remainder of the note, a set $R \subseteq \mathbb{N}$ that is c.e. but not computable (for example, we can take $R = 0'$, the halting problem).  Enumerate the prime numbers in order as $(p_n)_{n \in \mathbb{N}}$.

For each $n \in \mathbb{N}$, set
\[G_n = \begin{cases}\mathbb{Z} & n \in R \\ \mathbb{Z}/p_n\mathbb{Z} & n \not\in R\end{cases}\]

For the rest of the note, set $G = \bigoplus_{n \in \mathbb{N}}G_n$.

\begin{proposition}\label{prop:GNoCompPres}
There is no computable presentation of the group $G$.
\end{proposition}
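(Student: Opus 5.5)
We argue by contradiction. Suppose $H$ is a computable presentation of $G$: the domain is (a computable subset of) $\mathbb{N}$ and the group operation is computable, so equality of elements and the map $x \mapsto p x$ are decidable for every integer $p$. The idea is to show that such a presentation lets us decide membership in $\overline{R} = \mathbb{N} \setminus R$ by a c.e. procedure. Since $R$ is already c.e., $R$ would then be computable, which is the desired contradiction.

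The key is to find an isomorphism invariant that detects whether $G_n$ is finite. For $n \notin R$ we have $G_n = \mathbb{Z}/p_n\mathbb{Z}$. Since the primes $p_m$ are distinct, every element of order $p_n$ in $G$ lies in the $p_n$-torsion subgroup, and that subgroup is exactly $G_n$. Precisely, the $p_n$-primary torsion of $G$ is $\bigoplus_m (G_m)_{(p_n)}$, and $(G_m)_{(p_n)}$ is nonzero only when $m=n$ and $n\notin R$. The torsion-free summands $\mathbb{Z}$ contribute nothing. So
\[
n \notin R \iff G \text{ has an element of order } p_n .
\]
This equivalence is invariant under isomorphism, so it transfers to $H$.

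Now, given $n$, we compute $p_n$ and search through the elements $x$ of $H$, checking whether $x \neq 0$ and $p_n x = 0$. Both checks are decidable because the group operation and equality in $H$ are computable, and $p_n x$ is computed by repeated addition. Since $p_n$ is prime, $x\neq 0$ with $p_n x=0$ means exactly that $x$ has order $p_n$. The search halts if and only if $n \notin R$, so $\overline{R}$ is c.e.

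Combined with $R$ being c.e., Post's theorem shows that $R$ is computable, contradicting the choice of $R$. The only step needing care is the torsion computation, which shows that no other summand $G_m$ ($m \neq n$), and no torsion-free summand, contributes an element of order $p_n$. This follows from the coordinatewise description of $G$: if $x = (x_m)$ satisfies $p_n x = 0$, then $p_n x_m = 0$ in each $G_m$. This forces $x_m = 0$ when $G_m = \mathbb{Z}$ or when $G_m = \mathbb{Z}/p_m\mathbb{Z}$ with $p_m \neq p_n$.
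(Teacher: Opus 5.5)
Your proof is correct and follows the paper's argument: both identify $\{n : G \text{ has an element of order } p_n\}$ with $\mathbb{N}\setminus R$, observe that this set is c.e.\ relative to a computable presentation, and conclude that $R$ would be computable. Your semi-decision procedure, which searches for $x \neq 0$ with $p_n x = 0$, is a more precise version of the paper's phrase about ``computing the orders'' of elements, which cannot literally be done for elements of infinite order.
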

\begin{proof}
Suppose, towards a contradiction, that $G^\#$ is a computable presentation of $G$.  Let $P = \{n \in \mathbb{N} : G \text{ contains an element of order }p_n\}$.  Note that $G$ has an element of order $p_n$ if and only if $G_n = \mathbb{Z}/p_n\mathbb{Z}$ if and only if $n \not\in R$, that is, $P = \mathbb{N} \setminus R$.  

Since $G^\#$ is computable we can use it to successively compute the orders of each element of $G$, whence $P$ is a c.e. set.  Since $R$ was chosen to be a c.e. set,  this implies that $R$ is a computable set, contradicting our choice of $R$.
\end{proof}

\begin{remark}
It is not too difficult to show that the above group $G$ admits a c.e. presentation.  However, using \cite[Corollary 2.16]{UHFPaper}, this will be a consequence of the fact that $G$ is isomorphic to $K_0(\boldA)$ for a computably presentable \cstar-algebra $\boldA$.
\end{remark}

\section{Non-computable $K_0$}\label{sec:NonCompK0}
For each $n \in \mathbb{N}$, set
\[\boldB_n = \begin{cases}\mathbb{C} & n \in R \\ \mathcal{O}_{p_n+1} & n \not\in R\end{cases}\]
Here, $\mathcal{O}_k$ is the Cuntz algebra on $k$ generators, that is, $\mathcal{O}_k$ is the universal \cstar-algebra generated by partial isometries $s_1, \ldots, s_k$ such that each $s_i^*s_i=1$ and $\sum_{i=1}^ks_is_i^* = 1$.  It is well-known that each $\mathcal{O}_k$ is simple (\cite[Remarks after Theorem 1.12]{Cuntz1}) and has $K_0(\mathcal{O}_k) \cong \mathbb{Z}/(k-1)\mathbb{Z}$ (\cite[Theorem 3.7]{Cuntz2}).  We will also need the fact that the standard presentations of the Cuntz algebras are computable, uniformly in $n$; see \cite[Corollary 3.16]{fox2022computable}.

\begin{lemma}\label{lem:CompPresentBn}
The algebras $\boldB_n$ have computable presentations, uniformly in $n$.
\end{lemma}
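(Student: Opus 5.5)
The difficulty is that membership $n\in R$ is only c.e., so we cannot decide in advance which algebra $\boldB_n$ is. The plan is to build, uniformly in $n$, a computable presentation that begins by presenting $\mathcal{O}_{p_n+1}$ and switches to $\mathbb{C}$ once (if ever) $n$ is enumerated into $R$. The switching trick is to take the presentation of $\boldB_n$ to be the quotient of $\mathcal{O}_{p_n+1}$ by an ideal generated in stages. Since $\mathcal{O}_{p_n+1}$ is simple, any nonzero ideal is everything, so a direct quotient would give $0$, not $\mathbb{C}$. Instead I would work with the unitization $\widetilde{\mathcal{O}_{p_n+1}}\cong \mathcal{O}_{p_n+1}\oplus\mathbb{C}$ as a \cstar-algebra, with the special generators being the generators of $\mathcal{O}_{p_n+1}$ and the unit of the unitization. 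Its quotient by the ideal $\mathcal{O}_{p_n+1}\oplus 0$ is $\mathbb{C}$.

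Concretely, the key steps are as follows.
\begin{enumerate}
\item Take the uniformly computable presentation of $\mathcal{O}_{p_n+1}$ from \cite[Corollary 3.16]{fox2022computable}, with generators $s_1,\dots,s_{p_n+1}$.
\item Define $\boldB_n$ by a presentation with generators $s_1,\dots,s_{p_n+1}$ and a damping scalar. Using a standard time-dependent trick, let $t_n\in[0,1]$ be the computable real $t_n=2^{-k}$ if $n$ enters $R$ at stage $k$, and $t_n=0$ otherwise.
\item Present $\boldB_n$ as generated by the elements $2^{-?}$. This is awkward, so more cleanly: if $n$ enters $R$ at stage $k$, the generators should be sent to scalars. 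I would use the sequence $(s_i)$ when $n\notin R$, and the generators $(a_i)$ with $a_i = s_i$ before stage $k$ and $a_i$ "frozen" thereafter. This requires that norms of rational $*$-polynomials in the generators be computable, which fails if the algebra changes abruptly.
\end{enumerate}
So the honest plan is to use the fact that a computable presentation only needs the norms of rational $*$-polynomials in the special generators to be uniformly computable. I would choose special generators $g_i^{(n)}$, for $i\in\mathbb{N}$, that generate $\boldB_n$ in either case, whose norm data agree with those of $\mathcal{O}_{p_n+1}$ up to precision $2^{-k}$ until stage $k$.

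The main obstacle, which I expect to be decisive, is exactly this continuity requirement. $\mathcal{O}_{p_n+1}$ and $\mathbb{C}$ have very different norm data: in $\mathbb{C}$ the isometries commute, while $\|s_1s_2-s_2s_1\|$ is large in $\mathcal{O}_{p_n+1}$. So no stagewise switch on fixed generators works. I would therefore let the generators of $\boldB_n$ depend on the stage. Take generators $x_j$ whose norms are small: if $n$ enters $R$ at stage $k$, present $\mathcal{O}_{p_n+1}$ using the elements $2^{-j}$-scaled for $j<k$ and $0$ afterward... Such scaled-down generators still generate $\mathcal{O}_{p_n+1}$, but setting them to $0$ yields $\mathbb{C}$ together with the unit. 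That is, I would use the special generators $1$ and $x_{i,j}=2^{-j}s_i$ for $j<k$ and $x_{i,j}=0$ for $j\ge k$, with $x_{i,j}=2^{-j}s_i$ for all $j$ if $n\notin R$. Norms of polynomials are then computable to within $2^{-k}$ precision by running the enumeration of $R$ for $k$ steps, which is uniform in $n$. In both cases these elements generate the correct algebra ($\mathbb{C}$, respectively $\mathcal{O}_{p_n+1}$). Verifying this approximation bound carefully, using that $\|p\|$ for a polynomial $p$ with all monomials involving some $x_{i,j}$ with $j\ge k$ is at most $\|p\|_{\text{coeffs}}2^{-k}$, is the step I would check most carefully.
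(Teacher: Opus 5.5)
Your final construction does not present $\boldB_n$. If $n$ enters $R$ at a stage $k\geq 1$, your special points still include $1$ and $x_{i,0}=s_i$ for $i=1,\dots,p_n+1$ (indeed all $2^{-j}s_i$ with $j<k$). These generate $\mathcal{O}_{p_n+1}$, and setting the later points $x_{i,j}$ ($j\geq k$) to $0$ cannot shrink the algebra generated by the earlier ones. So the norms you compute, correctly and uniformly, are those of a presentation of $\mathcal{O}_{p_n+1}$ in every case except $n\in R_0$. Your claim that ``in both cases these elements generate the correct algebra'' is therefore false. Agreement ``up to precision $2^{-k}$'' is beside the point, because the norm data of a presentation are exact. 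Already $\|x_{i,0}^*x_{i,0}-1\|=0$ and $\|\sum_i x_{i,0}x_{i,0}^*-1\|=0$ cannot both hold in $\mathbb{C}$.

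More importantly, the obstacle you flagged really is decisive: the statement with ``uniformly in $n$'' is false, so no repair along these lines can succeed. Suppose the $\boldB_n^\#$ were uniformly computable with special points $(a^{(n)}_k)_k$. Then $\boldB_n$ is noncommutative iff some commutator of two special points or their adjoints has positive norm, and positivity of a uniformly computable real is a $\Sigma^0_1$ condition. Hence $\mathbb{N}\setminus R=\{n : \boldB_n \text{ is noncommutative}\}$ would be c.e., and $R$ would be computable.

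The paper takes a route you did not consider. It builds only uniformly c.e.\ presentations (upper approximations to norms), which permits a retroactive collapse: once $n$ appears in $R$ at stage $s_0$, the earlier points $p_k$ with $k<s_0$ are declared $0$ (norms only drop, to $\|\rho(\vec 0)\|=0$) and the later ones are declared $1$. It then invokes Fox's theorem that c.e.\ presentations of simple algebras are computable. That upgrade is valid for each $n$ separately, but the asserted uniformity cannot hold. In the paper's presentation (with $q_0=s_1$, say), $\|p_0\|$ is $1$ if $n\notin R$ and $0$ if $n\in R\setminus R_0$, so computing it uniformly would decide $R$ (each $R_s$ being finite). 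What is true is that each $\boldB_n$ is computably presentable and that the $\boldB_n$ are uniformly c.e.\ presentable. The proof of Proposition~\ref{prop:Bcomp}, which uses the uniform computable version, therefore does not go through as written.
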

\begin{proof}
We begin by noting that it suffices to show that each $\boldB_n$ has a c.e. presentation, uniformly in $n$.  Indeed, by a result of Fox \cite[Theorem 3.14]{fox2022computable}, any c.e. presentation of a \emph{simple} \cstar-algebra is computable (and the proof shows that the computability of the presentation is indeed uniform in the c.e. presentation).

Towards this end, fix $n$ and fix a computable enumeration $(R_s)_{s\in \mathbb{N}}$ of $R$, that is, $R_s$ consists of those natural numbers that have been determined to belong to $R$ by stage $s$.  Let $q_k$ denote the $k$th special point of the standard presentation of $\mathcal{O}_{p_n+1}$.  For all $k,s\in \mathbb{N}$ with $s\geq k$, we define $p_{k,s}\in \boldB_n$ as follows:
\begin{itemize}
    \item If $n\notin R_s$, set $p_{k,s}:=q_k$.
    \item If $n\in R_s\setminus R_k$, set $p_{k,s}:=0$.
    \item If $n\in R_k$, set $p_{k,s}:=1$.
\end{itemize}
If $s<k$, we declare that $p_{k,s}$ is undefined.  Given $k\in \mathbb{N}$, note that $p_{k,s}$ is eventually defined and constant; we denote this eventual constant value by $p_k$ and we set $Q:=\{p_k \ : \ k\in \mathbb{N}\}$.

Next note that in the case that $n\notin R$, we have $p_k=q_k$ for all $k$.  On the other hand, if $n\in R$ and $s_0\in \mathbb{N}$ is the least $s$ for which $n\in R_s$, then we have $p_k=0$ for $k<s_0$ and $p_k=1$ for $k\geq s_0$.  As a result, in either case, $Q$ generates a dense $*$-subalgebra of $\boldB_n$, whence we may consider $p_k$ as the $k$th special point of a presentation $\boldB_n^\#$ of $\boldB_n$.  

It remains to observe that $\boldB_n^\#$ is a c.e. presentation of $\boldB_n$, uniformly in $n$.  To see this, consider a rational $*$-polynomial $\rho(x_0,\ldots,x_m)$.  Let $(r_s)_{s\in \mathbb{N}}$ be a computable nonincreasing sequence converging to $\|\rho(q_0,\ldots,q_m)\|$ in $\mathcal{O}_{p_{n+1}}$.  We define a sequence $(t_s)_{s\geq m}$ as follows.  If $n\notin A_s$, set $t_s:=r_s$.  If $n\in R_s\setminus R_k$, set $t_s:=\rho(\vec 0)$.  If $n\in R_k$, set $t_s:=\rho(\vec 1)$.  It follows that $(t_s)_{s\in \mathbb{N}}$ is a computable nonincreasing sequence, uniformly in $n$, which converges to $\rho(p_0,\ldots,p_m)$, completing the proof.
\end{proof}

Set $\boldB = \bigoplus_{n \in \mathbb{N}}\boldB_n$.

\begin{proposition}\label{prop:Bcomp}
$\boldB$ has a computable presentation.
\end{proposition}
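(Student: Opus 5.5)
We will build a presentation of $\boldB$ directly from the uniformly computable presentations $\boldB_n^\#$ of Lemma \ref{lem:CompPresentBn}. Here $\boldB=\bigoplus_{n}\boldB_n$ is the $c_0$-direct sum: sequences $(b_n)$ with $b_n\in\boldB_n$ and $\|b_n\|\to 0$, with the sup norm. Let $p^{(n)}_k$ denote the $k$th special point of $\boldB_n^\#$, and let $\iota_n:\boldB_n\to\boldB$ be the canonical inclusion into the $n$th coordinate. Fix a computable bijection $\langle\cdot,\cdot\rangle:\mathbb{N}^2\to\mathbb{N}$. We take as the $\langle n,k\rangle$th special point of $\boldB^\#$ the element $\iota_n(p^{(n)}_k)$.

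\textbf{Density.} The $*$-algebra generated by these points contains, for each $n$, a dense $*$-subalgebra of $\iota_n(\boldB_n)$. The algebraic direct sum of the $\iota_n(\boldB_n)$ is dense in $\boldB$. Hence these points generate a dense $*$-subalgebra of $\boldB$, and $\boldB^\#$ is a presentation of $\boldB$.

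\textbf{Computability.} Let $\rho$ be a rational $*$-polynomial in finitely many of these special points. Only finitely many indices $n$ occur, say $n\in F$, and $F$ is computable from $\rho$. Since the $\iota_n(\boldB_n)$ are mutually orthogonal ideals, we can decompose
\[
\rho\bigl(\iota_{n}(p^{(n)}_k)\bigr) \;=\; \sum_{n\in F}\iota_n\bigl(\rho_n(p^{(n)}_k)\bigr)+\rho(\vec 0)\cdot(\text{constant part}).
\]
Here $\rho_n$ is obtained from $\rho$ by setting to $0$ every variable whose index lies outside coordinate $n$; this is purely syntactic and computable.

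The algebra $\boldB$ is non-unital, so we work with polynomials that have no constant term. Then $\rho_n$ also has no constant term, and the product of elements from different summands vanishes. Hence the $n$th coordinate of the element equals $\rho_n$ evaluated at the special points of $\boldB_n$, and every other coordinate is $0$. Since the norm on $\boldB$ is the sup norm, we get
\[
\bigl\|\rho(\ldots)\bigr\|=\max_{n\in F}\bigl\|\rho_n(p^{(n)}_\cdot)\bigr\|_{\boldB_n}.
\]
By the uniformity in Lemma \ref{lem:CompPresentBn}, each of these norms is computable uniformly in $(n,\rho_n)$. A finite maximum of computable reals is computable. So $\|\rho\|$ is computable uniformly in $\rho$, and $\boldB^\#$ is a computable presentation.

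\textbf{Expected obstacle.} The main point needing care is the bookkeeping showing that the coordinate projections really separate the polynomial evaluation. Concretely, one must check that the $n$th coordinate of $\rho$ evaluated at the points $\iota_m(p^{(m)}_k)$ equals $\rho_n$ evaluated at the points $p^{(n)}_k$. This holds because $\pi_n\circ\iota_m=0$ for $m\neq n$ and $\pi_n$ is a $*$-homomorphism. One should also confirm that the notion of presentation in \cite{UHFPaper} uses polynomials without constant term in the non-unital setting. The rest, namely uniformity and taking the finite maximum, is routine.
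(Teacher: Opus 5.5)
Your proof is correct, but it takes a different route from the paper.

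The paper does not build the presentation by hand. It forms the finite sums $\bigoplus_{n=1}^N\boldB_n$ with their direct-sum presentations, which are computable uniformly in $N$. It then appeals to the general inductive-limit presentation of \cite[Section 2]{Goldbring.2024+}, using \cite[Lemma 2.7(3)]{Goldbring.2024+}, to conclude that the limit presentation of $\boldB=\lim_N\boldB_N$ is computable.

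You work directly with the $c_0$-sum instead. You observe that a rational point only involves finitely many coordinates, and that $\pi_n$ is a $*$-homomorphism with $\pi_n\circ\iota_m=0$ for $m\neq n$. Together with the sup norm, this lets you compute $\|\rho\|$ \emph{exactly} as a finite maximum of uniformly computable reals, so no limiting argument is needed. Your max argument is also exactly the finite-direct-sum step that the paper asserts without proof.

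Each route has its advantage. Yours is self-contained and elementary, with special points that are simply the embedded special points of the summands. The paper's is shorter and rests on a general tool that also covers inductive limits with nontrivial connecting maps, where norms do not split as a maximum.

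One cosmetic point: drop the term $\rho(\vec 0)\cdot(\text{constant part})$ from your displayed decomposition. It is meaningless in the non-unital algebra $\boldB$, and it vanishes anyway because rational $*$-polynomials have zero constant term.
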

\begin{proof}
Using Lemma \ref{lem:CompPresentBn}, fix presentations $\boldB_n^\#$ for the algebras $\boldB_n$ such that these presentations are computable uniformly in $n$.  For each $N$, set $\boldB_N = \bigoplus_{n=1}^N\boldB_n$.  Then each $\boldB_N$ has a computable presentation $\boldB_N^\#$, uniformly in $N$, obtained by taking the direct sum of the presentations $\boldB_1^\#, \ldots, \boldB_N^\#$.  The inductive limit presentation constructed from the presentations $\boldB_N^\#$ as described in \cite[Section 2]{Goldbring.2024+} is then a computable presentation of $\lim_{N}\boldB_N=\boldB$, by \cite[Lemma 2.7(3)]{Goldbring.2024+}.
\end{proof}

\begin{theorem}\label{prop:KTheoryB}
$K_0(\boldB) \cong G$ and $K_1(\boldB) = 0$.
\end{theorem}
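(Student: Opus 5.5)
The plan is to compute the $K$-groups of each summand $\boldB_n$ and then pass to the direct sum, using that $K_0$ and $K_1$ commute with inductive limits. Here $\boldB = \bigoplus_{n}\boldB_n$ is the $c_0$-direct sum, which is the inductive limit of the finite direct sums $\boldB_N = \bigoplus_{n=1}^N \boldB_n$ along the inclusion maps $\boldB_N \to \boldB_{N+1}$, $x \mapsto (x,0)$.

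\textbf{Step 1: the summands.} For $n \in R$ we have $\boldB_n = \mathbb{C}$, so $K_0(\boldB_n) \cong \mathbb{Z} = G_n$ and $K_1(\boldB_n) = 0$. For $n \notin R$ we have $\boldB_n = \mathcal{O}_{p_n+1}$. By \cite[Theorem 3.7]{Cuntz2}, $K_0(\boldB_n) \cong \mathbb{Z}/p_n\mathbb{Z} = G_n$. For $K_1(\mathcal{O}_k) = 0$ I would also cite Cuntz \cite{Cuntz2}, where it is shown that the unitary group of $\mathcal{O}_k$ is connected. Since $\mathcal{O}_k \cong M_m(\mathcal{O}_k)$ for suitable $m$ (as $\mathcal{O}_k$ is purely infinite simple), this gives $K_1 = 0$. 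Thus $K_0(\boldB_n) \cong G_n$ and $K_1(\boldB_n) = 0$ for every $n$.

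\textbf{Step 2: finite sums.} By additivity of $K_0$ and $K_1$ on finite direct sums,
\[
K_0(\boldB_N) \cong \bigoplus_{n\le N} G_n \quad\text{and}\quad K_1(\boldB_N) = 0.
\]
Under these identifications, the map induced by the inclusion $\boldB_N \to \boldB_{N+1}$ is the canonical inclusion of $\bigoplus_{n \le N} G_n$ into $\bigoplus_{n \le N+1} G_n$. This holds because $K_*$ of a direct sum is identified via the coordinate inclusions and projections, which are compatible with the connecting maps.

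\textbf{Step 3: passage to the limit.} Continuity of $K_0$ and $K_1$ under inductive limits gives
\[
K_0(\boldB) \cong \varinjlim_N \bigoplus_{n\le N} G_n = \bigoplus_{n} G_n = G
\quad\text{and}\quad
K_1(\boldB) \cong \varinjlim_N 0 = 0.
\]

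\textbf{Main obstacle.} There is no serious obstacle. The points needing care are justifying $K_1(\mathcal{O}_k) = 0$ from the literature, and noting that $\boldB$ is non-unital, so $K_0$ must be taken in the non-unital sense (via unitization). Continuity under inductive limits still holds in that setting. As an alternative to Step 3, one can show directly that every projection or unitary in matrices over $\boldB^+$ is approximated by one supported on finitely many coordinates, which gives the same conclusion.
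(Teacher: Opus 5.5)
Your proposal is correct and follows the paper's route: identify $K_0(\boldB_n)\cong G_n$ summand by summand, then use that $K$-theory commutes with finite direct sums and inductive limits (the paper cites \cite[Proposition 4.3.4 and Theorem 6.3.2]{Rordam.Larsen.Laustsen.2000}). You go further than the paper's written proof, which never justifies $K_1(\boldB)=0$, by supplying $K_1(\mathcal{O}_k)=0$ plus continuity of $K_1$; just note that passing from connectedness of $U(\mathcal{O}_k)$ to $K_1(\mathcal{O}_k)=0$ needs $M_m(\mathcal{O}_k)\cong\mathcal{O}_k$ for cofinally many $m$ (e.g.\ $m=k^r$, via $x\mapsto(s_i^*xs_j)_{i,j}$), and this comes from the Cuntz relations rather than from pure infiniteness, or you can simply cite Cuntz's computation $K_1(\mathcal{O}_k)=0$ directly.
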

\begin{proof}
If $n \in R$ then $\boldB_n = \mathbb{C}$, so $K_0(\boldB_n) = \mathbb{Z}$, while if $n \not\in R$ then $\boldB_n = \mathcal{O}_{p_n+1}$, so $K_0(\boldB_n) = \mathbb{Z}/p_n\mathbb{Z}$.  That is, $K_0(\boldB_n) = G_n$ for all $n$.

Applying the fact that $K_0$ commutes with direct sums and direct limits \cite[Proposition 4.3.4 and Theorem 6.3.2]{Rordam.Larsen.Laustsen.2000}, we have:
\begin{align*}
K_0(\boldB) &= K_0\left(\bigoplus_{n \in \mathbb{N}}\boldB_n\right) \\
&\cong \bigoplus_{n\in\mathbb{N}}K_0(\boldB_n) \\
&\cong \bigoplus_{n \in \mathbb{N}}G_n \\
&= G
\end{align*}
The proof is complete by Proposition \ref{prop:GNoCompPres}.
\end{proof}

Putting together the results so far, we have:

\begin{corollary}
$\boldB$ is a computably presentable \cstar-algebra and $K_0(\boldB)$ has no computable presentation.
\end{corollary}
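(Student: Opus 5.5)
The corollary simply assembles the two halves already established. For computable presentability of $\boldB$ I will cite Proposition \ref{prop:Bcomp}, which builds $\boldB$ as the inductive limit of the finite direct sums $\bigoplus_{n\le N}\boldB_n$. That proposition uses the uniformly computable presentations of Lemma \ref{lem:CompPresentBn} and the inductive-limit lemma of \cite{Goldbring.2024+}. Nothing further is needed for this half.

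For the second half, I will use Theorem \ref{prop:KTheoryB}, which gives $K_0(\boldB)\cong G$. The argument there runs as follows.
\begin{itemize}
\item Since $K_0(\mathbb{C})\cong\mathbb{Z}$ and $K_0(\mathcal{O}_{p_n+1})\cong\mathbb{Z}/p_n\mathbb{Z}$, we get $K_0(\boldB_n)\cong G_n$.
\item By continuity of $K_0$ under finite direct sums and inductive limits, $K_0(\boldB)\cong\bigoplus_n G_n=G$.
\end{itemize}
Having a computable presentation is a property of the isomorphism type of the group. If $K_0(\boldB)$ had a computable presentation, we could transport it along the isomorphism and obtain a computable presentation of $G$. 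This contradicts Proposition \ref{prop:GNoCompPres}.

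There is no real obstacle here. The only point to be careful about is that "has no computable presentation" refers to abstract groups, so the isomorphism in Theorem \ref{prop:KTheoryB} need not be effective in any sense. That suffices, because computable presentability is invariant under isomorphism by definition.
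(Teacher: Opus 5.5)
Your proposal is correct and follows the paper's route: the corollary is the combination of Proposition \ref{prop:Bcomp}, the isomorphism $K_0(\boldB)\cong G$ from Theorem \ref{prop:KTheoryB}, and Proposition \ref{prop:GNoCompPres}. You are also right that only the abstract isomorphism is needed, since having a computable presentation is invariant under isomorphism.
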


\section{Non-computable $K_1$}
Using the algebra $\boldB$ from the previous section, let $\boldC$ be the suspension of $\boldB$, that is, $\boldC = S\boldB := C_0(0, 1) \otimes \boldB$.

\begin{proposition}\label{prop:CCompPres}
$\boldC$ has a computable presentation.
\end{proposition}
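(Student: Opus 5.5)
Take the computable presentation $\boldB^\#$ of $\boldB$ from Proposition \ref{prop:Bcomp}. Fix a computable presentation of $C_0(0,1)$; for instance, use the special point $f(t) = t(1-t)$, or, more concretely, a computable sequence of piecewise-linear functions with rational breakpoints vanishing at the endpoints. By Stone--Weierstrass this generates a dense $*$-subalgebra of $C_0(0,1)$, and norms of rational $*$-polynomials in it are computable, since they are suprema of explicit polynomials on $[0,1]$. The presentation of $\boldC$ then has as special points the elementary tensors $f \otimes b_k$, where $b_k$ ranges over the special points of $\boldB^\#$. Their linear span is dense in $C_0(0,1)\otimes\boldB$ because elementary tensors of generators span a dense subalgebra (products of such tensors are again tensors of products).

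Next I need to compute $\|\rho(f\otimes b_0,\ldots,f\otimes b_m)\|$ for every rational $*$-polynomial $\rho$. I identify $\boldC$ with $C_0((0,1),\boldB)$. Under this identification $\rho(f\otimes \vec b)$ is the function $t\mapsto \rho_t(\vec b)$, where $\rho_t$ is the polynomial obtained by substituting the scalar $f(t)$ into each monomial. Concretely, a monomial of degree $d$ picks up the factor $f(t)^d$, and $f$ is real so $*$ commutes with it. Hence
\[\|\rho(f\otimes \vec b)\| = \sup_{t\in[0,1]} \Bigl\|\sum_j f(t)^{d_j}\mu_j(\vec b)\Bigr\|.\]
For each rational $t$, the inner quantity is a norm of a $*$-polynomial in $\boldB^\#$ with computable coefficients, so it is computable uniformly in $t$.

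The map $t\mapsto \|\cdot\|$ is Lipschitz with a modulus that can be computed from $\rho$ and from upper bounds on the $\|b_k\|$. Those bounds are computable, since $\boldB^\#$ is computable. So the supremum can be approximated to within $\epsilon$ by taking a maximum over a sufficiently fine rational grid, and the norm is therefore computable uniformly in $\rho$.

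The main obstacle is making this evaluation-and-Lipschitz argument rigorous. Alternatively, I would cite a general result that the minimal tensor product of computable presentations, one of them commutative (or nuclear), is computable, presumably available in the literature (e.g.\ \cite{Goldbring.2024+} or \cite{fox2022computable}). If the special points must include more generators, I would take tensors $f_i\otimes b_k$ for a computable dense family $(f_i)$ of $C_0(0,1)$; the same sup-over-$t$ computation works, with a pointwise modulus of continuity computed from the explicit $f_i$.
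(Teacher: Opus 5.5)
There is a genuine gap: the special points you chose do not generate $\boldC$, so as written you have presented the wrong algebra. Your norm computation itself (evaluate at rational $t$, use a computable Lipschitz bound, maximize over a grid) is sound.

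The first problem is the choice of $f$. The function $f(t)=t(1-t)$ satisfies $f(t)=f(1-t)$, so it does not separate points and Stone--Weierstrass does not apply. In fact $\mathrm{C}^*(f)$ is exactly the subalgebra of functions symmetric under $t\mapsto 1-t$. Tensoring does not help. Since $f$ is real and evaluation at $t$ is a $*$-homomorphism, $\rho(f\otimes b_0,\ldots,f\otimes b_m)(t)=\rho(f(t)b_0,\ldots,f(t)b_m)$ depends on $t$ only through $f(t)$. So every element of the $\mathrm{C}^*$-algebra generated by the $f\otimes b_k$ satisfies $F(t)=F(1-t)$.

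The second problem is more fundamental. The principle that elementary tensors of generators generate the tensor product is false, because the function degree is locked to the word length in the $b_k$. Your remark that products of tensors are tensors of products does not address this. For example, in $C_0(0,1]\otimes C_0(0,1]\cong C_0((0,1]^2)$, the tensor $t\otimes s$ of the standard generators generates only functions of $ts$.

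The fix is your own fallback, but it needs an actual density argument rather than being optional. Use special points $f_i\otimes b_k$ for all pairs $(i,k)$, where $(f_i)$ is a computable sequence of piecewise-linear functions with rational breakpoints and rational values, vanishing at $0$ and $1$, and dense in $C_0(0,1)$. To approximate $g\otimes\mu(\vec b)$ for a $*$-monomial $\mu$ of degree $d$:
\begin{itemize}
\item factor $g=g_1\cdots g_d$ within $C_0(0,1)$, say $g_j=|g|^{1/d}$ for $j\geq 2$ and $g_1=g|g|^{1/d-1}$, taken to be $0$ where $g=0$;
\item approximate each $g_j$ (or $\overline{g_j}$ at starred letters) by some $f_i$;
\item use that such elementary tensors span a dense subspace of $C_0(0,1)\otimes\boldB$.
\end{itemize}
Your computation then goes through verbatim. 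At rational $t$ each $f_i(t)$ is rational, so $\rho(\ldots)(t)$ is a rational $*$-polynomial in the $b_k$, and the Lipschitz bound is computable from the $f_i$ and the computable reals $\|b_k\|$.

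Repaired this way, your argument proves directly what the paper obtains by citation. The paper's proof is a one-line appeal to Proposition \ref{prop:Bcomp} together with \cite[Lemma 3.1]{UHFPaper}. So your alternative of citing a general result is exactly the paper's route, except that the relevant lemma is in \cite{UHFPaper} rather than \cite{Goldbring.2024+} or \cite{fox2022computable}.
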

\begin{proof}
This follows immediately from Proposition \ref{prop:Bcomp} and \cite[Lemma 3.1]{UHFPaper}.
\end{proof}

\begin{proposition}\label{prop:KTheoryC}
$K_0(\boldC) \cong G$ and $K_1(\boldC) = 0$.
\end{proposition}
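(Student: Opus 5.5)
The plan is to read off the $K$-theory of $\boldC = S\boldB$ directly from that of $\boldB$ computed in Theorem \ref{prop:KTheoryB}, using the two standard suspension isomorphisms. The first is $K_1(S\boldA) \cong K_0(\boldA)$, which holds by definition of $K_1$ in R{\o}rdam--Larsen--Laustsen or via the isomorphism $\theta_\boldA$ there. The second is $K_0(S\boldA) \cong K_1(\boldA)$, which is Bott periodicity $K_0(\boldA)\cong K_1(S\boldA)\cong K_0(S^2\boldA)$ combined with the first. Both are natural and hold for arbitrary (in particular nonunital) \cstar-algebras, so they apply to $\boldB=\bigoplus_n\boldB_n$ with no extra work.

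First I would record the input from Theorem \ref{prop:KTheoryB}: $K_0(\boldB)\cong G$ and $K_1(\boldB)=0$. The vanishing of $K_1(\boldB)$ needs a short justification that the earlier proof only implicitly supplies. Each summand has $K_1(\mathbb{C})=0$ and $K_1(\mathcal{O}_{p_n+1})=0$ (Cuntz). Since $K_1$ also commutes with direct sums and inductive limits, $K_1(\boldB)\cong\bigoplus_n K_1(\boldB_n)=0$. Next I would apply the two isomorphisms to obtain $K_0(\boldC)=K_0(S\boldB)\cong K_1(\boldB)$ and $K_1(\boldC)=K_1(S\boldB)\cong K_0(\boldB)\cong G$.

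The main obstacle is a genuine mismatch with the statement as worded. The computation above places $G$ in degree $1$ and $0$ in degree $0$, whereas the statement asserts $K_0(\boldC)\cong G$ and $K_1(\boldC)=0$. Every step is a standard isomorphism, so there is no freedom to move $G$ into degree $0$. With $\boldC=S\boldB$, the worded assertion is therefore false. I would then check the literal claim at the level of examples: for $\boldB_n=\mathbb{C}$, one has $K_0(C_0(0,1))=0\neq\mathbb{Z}$, which confirms that the degrees are exchanged.

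Consequently I cannot give a correct proof of the statement exactly as worded. What the argument does establish is the degree-shifted version, $K_1(\boldC)\cong G$ and $K_0(\boldC)=0$. Together with Proposition \ref{prop:CCompPres} and Proposition \ref{prop:GNoCompPres}, that version yields the section's goal that $K_1(\boldC)$ has no computable presentation. I would flag the statement as containing a transposition of indices and prove that corrected form.
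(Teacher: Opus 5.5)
Your proposal is correct and follows the paper's own argument. The paper's proof cites exactly the two suspension isomorphisms, which give $K_0(\boldC)\cong K_1(\boldB)=0$ and $K_1(\boldC)\cong K_0(\boldB)\cong G$, so the statement as printed does have its indices transposed; Theorem \ref{thm:BothNoComp} confirms this by using $K_0(\boldC)=0$ and $K_1(\boldC)\cong G$. Your argument for $K_1(\boldB)=0$, via $K_1(\mathcal{O}_{p_n+1})=0$ together with additivity and continuity of $K_1$, supplies a step the paper leaves implicit.

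One correction to your citations: you have the two isomorphisms' justifications swapped. In R{\o}rdam--Larsen--Laustsen, $K_1$ is defined via unitaries, not suspensions. The map $\theta_{\boldB}$ is the isomorphism $K_1(\boldB)\to K_0(S\boldB)$, and it is Bott periodicity that gives $K_0(\boldB)\cong K_1(S\boldB)$. Cite these two isomorphisms directly rather than deriving the second from the first, since that derivation does not go through as written.
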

\begin{proof}
Since $\boldC = S\boldB$, we have $K_0(\boldC) \cong K_1(\boldB)$ and $K_1(\boldC) \cong K_0(\boldB)$ (see \cite[Theorem 10.1.3 and Corollary 11.3.1]{Rordam.Larsen.Laustsen.2000}).
\end{proof}

We therefore have:

\begin{corollary}
$\boldC$ is a computably presentable \cstar-algebra and $K_1(\boldC)$ has no computable presentation.
\end{corollary}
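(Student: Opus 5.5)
The corollary follows by combining the results already established for $\boldC$ with Proposition \ref{prop:GNoCompPres}.

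\emph{Computable presentability of $\boldC$.} This is exactly Proposition \ref{prop:CCompPres}. It was obtained from the computable presentation of $\boldB$ in Proposition \ref{prop:Bcomp} together with the fact from \cite[Lemma 3.1]{UHFPaper} that suspension preserves computable presentability.

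\emph{Non-computability of $K_1(\boldC)$.} First I identify $K_1(\boldC)$. Since $\boldC = S\boldB$, Bott periodicity gives $K_1(S\boldB) \cong K_0(\boldB)$. By Theorem \ref{prop:KTheoryB}, $K_0(\boldB)\cong G$, so $K_1(\boldC)\cong G$. (Proposition \ref{prop:KTheoryC} as printed has the two groups swapped; the isomorphisms in its proof actually give $K_1(\boldC)\cong K_0(\boldB)\cong G$ and $K_0(\boldC)\cong K_1(\boldB)=0$, and it is this corrected form that I use.) Now suppose $K_1(\boldC)$ had a computable presentation. Computable presentability of a group is invariant under group isomorphism, since a presentation of one group transported along an isomorphism is a presentation of the other. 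So $G$ would then have a computable presentation, contradicting Proposition \ref{prop:GNoCompPres}.

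The only real obstacle is confirming $K_1(\boldB)=0$, which the corrected Proposition \ref{prop:KTheoryC} uses to get $K_0(\boldC)=0$, although the corollary itself does not need this. If it needed checking, I would argue as follows. $K_1(\mathbb{C})=0$, and $K_1(\mathcal{O}_k)=0$ by Cuntz's computation. Then $K_1$ commutes with direct sums and inductive limits, so $K_1(\boldB)=0$.
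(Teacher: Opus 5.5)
Your proposal is correct and follows the paper's own reasoning. You combine Proposition \ref{prop:CCompPres} with $K_1(\boldC)=K_1(S\boldB)\cong K_0(\boldB)\cong G$ and Proposition \ref{prop:GNoCompPres}. You are also right that the printed statement of Proposition \ref{prop:KTheoryC} has $K_0$ and $K_1$ swapped relative to its proof; the paper itself uses the corrected form $K_0(\boldC)=0$, $K_1(\boldC)\cong G$ in Theorem \ref{thm:BothNoComp}. Your justification of $K_1(\boldB)=0$, which the paper leaves unproved, is sound.
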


\section{The main examples}
Putting together the examples from the previous two sections, we get a single computably presentable \cstar-algebra where neither $K_0$ nor $K_1$ is computably presented.  

\begin{theorem}\label{thm:BothNoComp}
There is a computably presentable \cstar-algebra $\boldA$ such that neither $K_0(\boldA)$ nor $K_1(\boldA)$ are computably presentable.
\end{theorem}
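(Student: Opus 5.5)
Take $\boldA = \boldB \oplus \boldC$, where $\boldB$ is the algebra of Section~\ref{sec:NonCompK0} and $\boldC = S\boldB$. We first show that $\boldA$ is computably presentable. By Proposition~\ref{prop:Bcomp} and Proposition~\ref{prop:CCompPres}, fix computable presentations $\boldB^\#$ and $\boldC^\#$. We then form the direct sum presentation of $\boldB\oplus\boldC$ whose special points are the pairs $(b_k,0)$ and $(0,c_k)$, listed in alternation. The norm of a rational $*$-polynomial evaluated at these points is the maximum of the norms of the two component polynomials, computed in $\boldB^\#$ and $\boldC^\#$ respectively. Since the maximum of two computable reals is computable, this presentation is computable. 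This is the same finite direct sum construction already used in the proof of Proposition~\ref{prop:Bcomp}.

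Next, we compute the $K$-theory. Since $K_0$ and $K_1$ are additive on finite direct sums, Theorem~\ref{prop:KTheoryB} and Proposition~\ref{prop:KTheoryC} give
\[K_0(\boldA)\cong K_0(\boldB)\oplus K_0(\boldC)\cong G\oplus 0\cong G,\]
and
\[K_1(\boldA)\cong K_1(\boldB)\oplus K_1(\boldC)\cong 0\oplus G\cong G.\]
Proposition~\ref{prop:KTheoryC} is stated with the roles swapped, but its proof (the suspension isomorphisms) gives $K_1(\boldC)\cong K_0(\boldB)\cong G$ and $K_0(\boldC)\cong K_1(\boldB)=0$, which is what we use here.

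Finally, Proposition~\ref{prop:GNoCompPres} says that $G$ has no computable presentation. Having a computable presentation is invariant under isomorphism, since it is a property of the isomorphism type. Hence neither $K_0(\boldA)$ nor $K_1(\boldA)$ is computably presentable.

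The only step that needs care is the computability of the direct sum presentation. It is routine and parallels the finite sums in the proof of Proposition~\ref{prop:Bcomp}; if preferred, it can be obtained directly from that argument applied to two summands.
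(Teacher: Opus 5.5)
Your proof is correct and matches the paper's argument: $\boldA=\boldB\oplus\boldC$ with $\boldC=S\boldB$, computability of the two-summand direct sum presentation, additivity of $K_0$ and $K_1$, and Proposition~\ref{prop:GNoCompPres}. You are also right that the statement of Proposition~\ref{prop:KTheoryC} has $K_0$ and $K_1$ interchanged; the paper's proof of this theorem, like yours, uses the corrected version $K_0(\boldC)=0$, $K_1(\boldC)\cong G$ given by the suspension isomorphisms.
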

\begin{proof}
Using the algebras $\boldB$ and $\boldC$ defined earlier, let $\boldA = \boldB \oplus \boldC$.  The algebra $\boldA$ has a computable presentation because $\boldB$ and $\boldC$ have computable presentations.

As mentioned above,  $K_0$ commutes with direct sums, so Propositions \ref{prop:KTheoryB} and \ref{prop:KTheoryC} yield that
\[K_0(\boldA) \cong K_0(\boldB) \oplus K_0(\boldC) \cong G \oplus 0 \cong G.\]
Likewise, $K_1$ commutes with direct sums (\cite[Proposition 8.2.6]{Rordam.Larsen.Laustsen.2000}), so we have
\[K_1(\boldA) \cong K_1(\boldB) \oplus K_1(\boldC) \cong 0 \oplus G \cong G.\]
By Proposition \ref{prop:GNoCompPres}, neither $K_0(\boldA)$ nor $K_1(\boldA)$ has a computable presentation.
\end{proof}

Finally, we reach our main result.

\begin{theorem}\label{main}
There is a computably presentable, \emph{unital}, nuclear \cstar-algebra $\boldD$ such that neither $K_0(\boldD)$ nor $K_1(\boldD)$ is computably presentable.
\end{theorem}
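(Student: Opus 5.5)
The plan is to unitize: take $\boldD = \widetilde{\boldA}$, the minimal unitization of the algebra $\boldA = \boldB \oplus \boldC$ from Theorem \ref{thm:BothNoComp}. Then we check three things in turn: computable presentability, nuclearity, and the $K$-theory computation.

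\emph{Computable presentation.} Given the computable presentation $\boldA^\#$, we present $\widetilde{\boldA}$ by taking $1$ as the $0$th special point, followed by the special points of $\boldA^\#$. For a rational $*$-polynomial $\rho$ in $1$ and the points $a_i$, we compute $\|\rho\|$ in $\widetilde{\boldA}$ using the standard formula for the unitization norm:
\[
\|a+\lambda 1\| = \max\bigl(|\lambda|, \sup\{\|ab+\lambda b\| : b \in \boldA,\ \|b\|\le 1\}\bigr).
\]
Since $\boldA$ is non-unital, the first term may be dropped. The supremum is a left-c.e. quantity, because we can range over rational polynomials $b$ in the special points with norm computably below $1$. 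It is also upper semicomputable, because we have an approximate unit given computably. Indeed, $\boldA$ is a direct sum of the computably presented $\boldB_n$ and $S\boldB$, and the partial sums of units of $\boldB_n$ (tensored with bump functions for the suspension) form an explicit approximate unit $e_N$. We then have $\|a+\lambda 1\| = \lim_N \|ae_N+\lambda e_N\|$, and this limit is also an infimum of suitable computable upper bounds.

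Rather than grinding this out, I would cite the general fact, presumably in \cite{UHFPaper} or \cite{Goldbring.2024+}, that the unitization of a computably presented \cstar-algebra is computably presentable. This is the step I expect to be the main obstacle if no such lemma is available. In that case the fallback is to use the explicit approximate unit above and show both upper and lower approximations to the norm are computable, uniformly in $\rho$.

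\emph{Nuclearity.} The algebras $\mathbb{C}$ and $\mathcal{O}_k$ are nuclear. Nuclearity passes to direct sums and inductive limits, so $\boldB$ is nuclear. It also passes to tensor products with the commutative algebra $C_0(0,1)$, so $\boldC$ is nuclear, and hence $\boldA$ is nuclear. Finally, nuclearity passes to unitizations, which are extensions of $\mathbb{C}$ by $\boldA$, so $\boldD$ is nuclear.

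\emph{$K$-theory.} The split exact sequence $0\to \boldA\to\widetilde{\boldA}\to\mathbb{C}\to 0$, together with split exactness of $K_0$ and $K_1$, gives
\[
K_0(\boldD)\cong K_0(\boldA)\oplus\mathbb{Z}\cong G\oplus\mathbb{Z}, \qquad K_1(\boldD)\cong K_1(\boldA)\oplus K_1(\mathbb{C}) = G.
\]
So $K_1(\boldD)$ has no computable presentation by Proposition \ref{prop:GNoCompPres}.

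For $K_0(\boldD)$ we need $G\oplus\mathbb{Z}$ to have no computable presentation. This follows by the same argument as in Proposition \ref{prop:GNoCompPres}. The extra summand $\mathbb{Z}$ is torsion-free, so $G\oplus\mathbb{Z}$ still has an element of order $p_n$ exactly when $n\notin R$. A computable presentation would therefore make $\mathbb{N}\setminus R$ c.e., which would make $R$ computable, a contradiction.
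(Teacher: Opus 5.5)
Your proposal is correct and follows the paper's proof: unitize $\boldA$, invoke the general unitization result, and use split exactness of $K_0$ and $K_1$. That result is \cite[Proposition 2.21]{UHFPaper}, so your approximate-unit fallback is not needed, which is just as well because the quantities $\|(a+\lambda)e_N\|$ are only lower bounds and do not by themselves give upper semicomputability. The only other difference is cosmetic: you rerun the torsion argument of Proposition \ref{prop:GNoCompPres} for $G\oplus\mathbb{Z}$, whereas the paper simply notes $G\oplus\mathbb{Z}\cong G$, which implicitly uses that the noncomputable set $R$ is infinite.
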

\begin{proof}
Let $\boldD$ be the unitization of the algebra $\boldA$ used in the proof of Theorem \ref{thm:BothNoComp}.  Note that since each $\boldB_n$ is nuclear, $\boldD$ is nuclear as well.  Since $\boldA$ has a computable presentation, $\boldD$ does as well by \cite[Proposition 2.21]{UHFPaper}.  We have $K_1(\boldD) \cong K_1(\boldA)$ (\cite[Equation 8.4]{Rordam.Larsen.Laustsen.2000}), so $K_1(\boldD)$ has no computable presentation.

For $K_0$, we have $K_0(\boldD) \cong K_0(\boldA) \oplus \mathbb{Z}$ (\cite[Example 4.3.5]{Rordam.Larsen.Laustsen.2000}).  Since $G \oplus \mathbb{Z} \cong G$, $K_0(\boldD)$ has no computable presentation.
\end{proof}

\section{A stably finite example}

The algebra $\boldD$ in the previous section is \emph{infinite} in the sense that it has a proper isometry.  As mentioned in the introduction, AF algebras have the property that any computable presentation of them yields a computable induced presentation on the $K_0$ group.  Since AF algebras are finite, one might suspect that the distinction at play here is the finite vs. infinite one.  In this section, we show that one can in fact find stably finite \cstar-algebras satisfying the conclusion of Theorem \ref{main}.

For each $n\geq 2$, let $\Gamma_n:=\langle a,b \ | \ bab^{-1}=a^n\rangle$, the so-called Baumslag-Solitar group $BS(1,n)$.  The following lemma captures the key facts we need about the group \cstar-algebras of the Baumslag-Solitar groups.

\begin{lemma}
For every $n \geq 2$, $C^*(\Gamma_n)$ is a stably finite residually finite dimensional \cstar-algebra and has $K_0(C^*(\Gamma_n)) \cong \mathbb{Z}$ and $K_1(C^*(\Gamma_n)) \cong \mathbb{Z} \oplus \mathbb{Z}/(n-1)\mathbb{Z}$.
\end{lemma}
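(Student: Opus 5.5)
The plan is to realize $\Gamma_n$ as a semidirect product $\mathbb{Z}[1/n]\rtimes\mathbb{Z}$, where the generator $b$ acts by multiplication by $n$ and $a$ corresponds to $1\in\mathbb{Z}[1/n]$. Then
\[
C^*(\Gamma_n)\cong C^*(\mathbb{Z}[1/n])\rtimes_\alpha\mathbb{Z}\cong C(\widehat{\mathbb{Z}[1/n]})\rtimes_\alpha \mathbb{Z}.
\]
Since $\Gamma_n$ is solvable, hence amenable, full and reduced group algebras agree and the algebra is nuclear. The dual group $X_n:=\widehat{\mathbb{Z}[1/n]}$ is the $n$-adic solenoid, and $\alpha$ is induced by the dual automorphism $\hat\alpha$ ("multiplication by $n$"), which is a homeomorphism of $X_n$.

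\textbf{Residual finite dimensionality.} $\Gamma_n$ is finitely generated and metabelian, so it is residually finite by Hall's theorem. Since $\Gamma_n$ is amenable, the result of Bekka and Lubotzky–Shalom (amenable residually finite groups, or more generally maximally almost periodic amenable groups, have RFD full group \cstar-algebras) gives that $C^*(\Gamma_n)$ is RFD. As a direct alternative: the $\hat\alpha$-periodic points are dense in the solenoid, and a crossed product $C(X)\rtimes\mathbb{Z}$ with dense periodic points is RFD, because it is approximated by the finite-dimensional representations on the finite orbits.

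\textbf{Stable finiteness.} Any RFD algebra embeds in $\prod_k M_{d_k}$. The matrix amplifications $M_m(C^*(\Gamma_n))$ are then also RFD and embed in $\prod_k M_{md_k}$. Each of these products is finite, since an isometry in $\prod_k M_{md_k}$ is coordinatewise an isometry of a finite matrix algebra and hence a unitary. So $C^*(\Gamma_n)$ is stably finite.

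\textbf{K-theory via Pimsner–Voiculescu.} First compute $K_*(C(X_n))$. Write $X_n$ as the inverse limit of circles $\mathbb{T}\xleftarrow{z^n}\mathbb{T}\leftarrow\cdots$, so that $C(X_n)=\varinjlim C(\mathbb{T})$ with connecting maps induced by $z\mapsto z^n$. Continuity of K-theory then gives
\[
K_0(C(X_n))=\mathbb{Z}\ (\text{the class of }1),\qquad K_1(C(X_n))=\varinjlim(\mathbb{Z}\xrightarrow{n}\mathbb{Z})=\mathbb{Z}[1/n].
\]
On $K_0$ the automorphism $\alpha_*$ is the identity. On $K_1\cong\mathbb{Z}[1/n]$ it is multiplication by $n$, or by $n^{-1}$; the choice does not matter for the final answer. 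So $1-\alpha_*$ is zero on $K_0$ and is multiplication by $1-n$ on $\mathbb{Z}[1/n]$.

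The Pimsner–Voiculescu six-term sequence then yields the short exact sequences
\[
0\to\operatorname{coker}(1-\alpha_*)_0\to K_0(C^*\Gamma_n)\to\ker(1-\alpha_*)_1\to0
\]
and
\[
0\to\operatorname{coker}(1-\alpha_*)_1\to K_1(C^*\Gamma_n)\to\ker(1-\alpha_*)_0\to0.
\]
The kernel of multiplication by $1-n$ on the torsion-free group $\mathbb{Z}[1/n]$ is $0$, and $\operatorname{coker}(1-\alpha_*)_0=\mathbb{Z}$, so $K_0\cong\mathbb{Z}$. The cokernel $\mathbb{Z}[1/n]/(n-1)\mathbb{Z}[1/n]$ is $\mathbb{Z}/(n-1)\mathbb{Z}$, because $n$ is invertible mod $n-1$. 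Also $\ker(1-\alpha_*)_0=\mathbb{Z}$ is free, so the second sequence splits and $K_1\cong\mathbb{Z}\oplus\mathbb{Z}/(n-1)\mathbb{Z}$.

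\textbf{Main obstacle.} The step I expect to need the most care is correctly identifying the induced map $\alpha_*$ on $K_1(C(X_n))$ through the inductive limit, including checking that it is an automorphism. Short of that, citing the known computation for $BS(1,n)$ in the literature is the fallback. RFD-ness is handled by citation.
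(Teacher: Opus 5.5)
Your proposal is correct, but it reaches the conclusions by a partly different route than the paper.

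The paper's proof is almost entirely by citation. Amenability gives $C^*(\Gamma_n)\cong C^*_r(\Gamma_n)$. Stable finiteness then comes from the faithful canonical tracial state on the reduced algebra. RFD comes from the linearity of $\Gamma_n$ (the explicit embedding into $\operatorname{GL}_2$) together with Bekka--Louvet. The $K$-groups are quoted from Pooya--Valette.

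\emph{RFD.} You use residual finiteness via Hall's theorem plus the amenable-and-residually-finite criterion. This is essentially the same input, since finitely generated linear groups are residually finite by Malcev. Your dense-periodic-points alternative also works: the periodic points of $\hat\alpha$ form a directed union of subgroups, hence a subgroup, whose annihilator is $\bigcap_k (n^k-1)\mathbb{Z}[1/n]=0$.

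\emph{Stable finiteness.} Deriving it from RFD rather than from a trace is more elementary and avoids passing through $C^*_r$ at that step. You should note that each finite-dimensional representation can be compressed to $\pi(1)H$, so the embedding into $\prod_k M_{d_k}$ is unital. Your isometry argument tacitly uses this.

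\emph{$K$-theory.} This is the real gain. Your Pimsner--Voiculescu computation on $C(X_n)\rtimes\mathbb{Z}$ is correct and makes the answer transparent, whereas the paper treats it as a black box. The $\mathbb{Z}/(n-1)\mathbb{Z}$ is the cokernel of $1-n$ on $K_1(C(X_n))\cong\mathbb{Z}[1/n]$, and the free $\mathbb{Z}$ summand of $K_1$ is $\ker(1-\alpha_*)_0$.

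The step you flag as the main obstacle is harmless. $\alpha_*$ is automatically an automorphism because it is induced by a $*$-automorphism. Tracking $u_{1/n^k}\mapsto u_{1/n^{k-1}}=u_{1/n^k}^{\,n}$ through the inductive limit shows it is multiplication by $n$. In any case $n$ is a unit in $\mathbb{Z}[1/n]$, so $1-n$ and $1-n^{-1}$ have the same image $(n-1)\mathbb{Z}[1/n]$.
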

\begin{proof}
We may view $\Gamma_n = \langle a, b \ | \ bab^{-1} = a^n\rangle$ as a semi-direct product $\mathbb{Z}[1/n]\rtimes\mathbb{Z}$, where the $\mathbb{Z}$ factor corresponds to the subgroup $\langle b \rangle$ which acts by taking $n$th powers.  Thus $\Gamma_n$ is an extension of $\mathbb{Z}[1/n]$ by $\mathbb{Z}$, and hence is an (elementary) amenable group.  Therefore $C^*(\Gamma_n) \cong C^*_r(\Gamma_n)$ (see \cite[Theorem 2.6.8]{BrownOzawa}).  It thus suffices to show that $C^*_r(\Gamma_n)$ is residually finite dimensional and stably finite.  The latter property is shared by all reduced group \cstar-algebras, because they have faithful tracial states (see \cite[Proposition 2.5.3]{BrownOzawa}), so we only need to show that $C^*_r(\Gamma_n)$ is residually finite dimensional.

Each $\Gamma_n$ is a linear group, as it is isomorphic to the subgroup $$\left\{\begin{bmatrix}n^k & s \\ 0 & 1\end{bmatrix} : k \in \mathbb{Z}, s \in \mathbb{Z}[1/n]\right\}$$ of $\operatorname{GL}_2(\mathbb{C})$.  Thus by \cite[Theorem 4.3]{BekkaLouvet} each $C^*_r(\Gamma_n)$ is residually finite dimensional.  Again, we have $C^*(\Gamma_n) \cong C^*_r(\Gamma_n)$, so the former is also residually finite dimensional.
The statements about the $K$-theory of $C^*(\Gamma_n)$ are proved in \cite[Theorem 1]{PooyaValette}.
\end{proof}

Now define $\boldB_n$ as in Section \ref{sec:NonCompK0}, except replace $\mathcal{O}_{p_n+1}$ by $C^*(\Gamma_{p_n+1})$ when $n\notin R$.  Equip $C^*(\Gamma_{p_n+1})$ with its standard presentation.  Now one observes that Lemma \ref{lem:CompPresentBn} continues to hold in this setting.  Indeed, the standard presentation of $C^*(\Gamma_{p_n+1})$ is clearly c.e. uniformly in $n$.  Moreover, this presentation is in fact computable uniformly in $n$ by \cite[Theorem 3.5]{fox2022computable}, since $C^*(\Gamma_{p_n+1})$ is residually finite dimensional, thereby establishing Lemma \ref{lem:CompPresentBn} with the redefined $\boldB_n$.  As a result, setting $\boldB:=\bigoplus_n \boldB_n$, we have that $K_1(\boldB)\cong G$, where $G$ is the group admitting no computable presentation defined in Section 1, and $K_0(\boldB) = \bigoplus_n \mathbb{Z}$.  The algebra $\boldB$ is stably finite because each $\boldB_n$ is stably finite.  We note also that since each $\Gamma_n$ is amenable each $\boldB_n$ is nuclear, and hence $\boldB$ is nuclear as well.

We have thus found a stably finite \cstar-algebra $\boldB$ that is computably presentable (the validity of Proposition \ref{prop:Bcomp} depended only on the conclusion of Lemma \ref{lem:CompPresentBn}) for which $K_1(\boldB)$ is not computably presentable.  Setting $\boldA:=\widetilde{S\boldB}$, the unitization of the suspension of $\boldB$, we have that $K_0(\boldA)\cong K_1(\boldB)\oplus \mathbb Z\cong G\oplus \mathbb Z \cong G$.  Note also that $\boldA$ is computably presentable, stably finite and that $K_1(\boldA)\cong K_0(\boldB)=\bigoplus_n \mathbb{Z}$.  Finally, setting $\boldE:=\boldA\oplus \boldB$ yields a computably presentable, stably finite \cstar-algebra with $K_0(\boldE)\cong K_0(\boldA)\oplus K_0(\boldB) \cong G$ and $K_1(\boldE)\cong K_1(\boldA) \oplus K_1(\boldB) \cong G$, neither of which are computably presentable.  In summary, we have shown:

\begin{theorem}
There is a computably presentable, stably finite, unital, nuclear \cstar-algebra $\boldE$ such that neither $K_0(\boldE)$ nor $K_1(\boldE)$ has a computable presentation.
\end{theorem}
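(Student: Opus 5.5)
The plan is to assemble $\boldE := \widetilde{S\boldB} \oplus \boldB$, with $\boldB = \bigoplus_n \boldB_n$ built from the redefined algebras ($\mathbb{C}$ if $n\in R$ and $C^*(\Gamma_{p_n+1})$ if $n\notin R$), and to check four groups of properties in turn: computable presentability, the $K$-groups, unitality and nuclearity, and stable finiteness.

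\emph{Computable presentability.} The redefined $\boldB_n$ are computably presentable uniformly in $n$: the standard presentation of $C^*(\Gamma_{p_n+1})$ is c.e. uniformly in $n$, and it is computable by \cite[Theorem 3.5]{fox2022computable} because these algebras are residually finite dimensional. The switching construction of Lemma \ref{lem:CompPresentBn} then goes through verbatim. One point needs care here, since $\boldB_n$ need not be simple. Lemma \ref{lem:CompPresentBn} used simplicity to upgrade c.e. to computable, so I would verify directly that the switched presentation is computable. In the branch $n\in R_s$ the norms of rational $*$-polynomials in the constants $0,1$ are computable outright, and in the branch $n\notin R_s$ they are computed through the computable presentation of $C^*(\Gamma_{p_n+1})$. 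Hence computable approximations from both sides are obtained stage by stage. Given this, Proposition \ref{prop:Bcomp} yields a computable presentation of $\boldB$. Then \cite[Lemma 3.1]{UHFPaper} gives one for $S\boldB$, \cite[Proposition 2.21]{UHFPaper} gives one for the unitization $\boldA=\widetilde{S\boldB}$, and taking direct sums of presentations gives one for $\boldE$.

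\emph{The $K$-groups.} Using the $K$-theory lemma for $C^*(\Gamma_n)$ together with continuity and additivity of $K_*$, we get $K_0(\boldB)\cong\bigoplus_n\mathbb{Z}$. We also get $K_1(\boldB)\cong\bigoplus_{n\notin R}(\mathbb{Z}\oplus\mathbb{Z}/p_n\mathbb{Z})$. This is not literally $G$, so I would check that it is still not computably presentable. The argument of Proposition \ref{prop:GNoCompPres} applies unchanged: elements of order $p_n$ exist exactly when $n\notin R$, so any computable presentation would make $\mathbb{N}\setminus R$ c.e. Suspension and unitization give $K_0(\boldA)\cong K_1(\boldB)\oplus\mathbb{Z}$ and $K_1(\boldA)\cong K_0(\boldB)$. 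Therefore $K_0(\boldE)\cong K_1(\boldB)\oplus\mathbb{Z}\oplus\bigoplus_n\mathbb{Z}$ and $K_1(\boldE)\cong \bigoplus_n\mathbb{Z}\oplus K_1(\boldB)$. Each of these still has torsion elements of order $p_n$ exactly for $n\notin R$, since the free summands contribute no torsion. So the same order-counting argument shows that neither has a computable presentation.

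\emph{Unitality and nuclearity.} $\boldE$ is unital only if $\boldB$ is, and $\boldB$ is an infinite direct sum, so it is not unital. I would therefore take the unitization $\widetilde{\boldE}$ instead. This changes $K_0$ only by adding a copy of $\mathbb{Z}$, leaves $K_1$ unchanged, and keeps computable presentability by \cite[Proposition 2.21]{UHFPaper}. Nuclearity holds because amenability of $\Gamma_n$ makes each $\boldB_n$ nuclear, and nuclearity passes to direct sums, suspensions and unitizations.

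\emph{Stable finiteness.} I expect this step to be the main obstacle. Each $\boldB_n$ is stably finite by the lemma, and stable finiteness passes to $c_0$-direct sums because matrices over the sum decompose coordinatewise. $S\boldB$ sits inside $C([0,1],\boldB)$, and unitizations of stably finite algebras are stably finite. I would argue the unitization step using that each $\boldB_n$ is residually finite dimensional, hence $\boldB$ is too, so $\widetilde{\boldB}$ and the unitizations of $S\boldB$ and of the whole sum embed into products of matrix algebras. Residual finite dimensionality of $S\boldB$ follows by evaluating at points of $(0,1)$. Such products are stably finite, and this finishes the proof.
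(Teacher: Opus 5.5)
\paragraph{The gap: computability of the switched presentation.} This step fails. Your ``stage by stage'' verification does not give a computable presentation. An answer output for $\|\rho(p_0,\dots,p_m)\|$ at a stage where $n\notin R_s$ is a permanent claim about the final algebra. But if $n$ later enters $R$, then $p_k=0$ for every $k$ below the entry stage. For instance, whenever $q_0\neq 0$ you will have output $\|p_0\|\approx\|q_0\|>0$, while the true value is $0$. Approximations that can be retracted only make the norm limit computable, not computable.

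No cleverer presentation of the same algebras can repair this. There is \emph{no} family of presentations of these $\boldB_n$ that is computable uniformly in $n$. Suppose there were one. Since $C^*(\Gamma_{p_n+1})$ is noncommutative and $\mathbb{C}$ is commutative, $\mathbb{N}\setminus R$ would be c.e.: search for rational $*$-polynomials $\rho_1,\rho_2$ with $\|\rho_1\rho_2-\rho_2\rho_1\|>0$ in $\boldB_n^\#$. Together with $R$ being c.e., this would make $R$ computable. Proposition \ref{prop:Bcomp} needs exactly this uniformity, so you have no computable presentation of $\boldB$, and everything after it depends on that. The paper's text has the same weakness. It only asserts that Lemma \ref{lem:CompPresentBn} carries over, and the uniform claim in that lemma is refuted by the same commutator argument (with $\mathcal{O}_{p_n+1}$ in place of $C^*(\Gamma_{p_n+1})$). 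So following the paper does not close the gap.

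\paragraph{A possible repair.} What is missing is a switch that respects the finitely many commitments already made. Residual finite dimensionality of $C^*(\Gamma_{p_n+1})$ makes this possible:
\begin{itemize}
\item While $n\notin R_s$, answer each query $(\rho,j)$ to precision $2^{-j-1}$ instead of $2^{-j}$.
\item When $n$ enters $R$, search deterministically for a finite-dimensional representation $\pi$, for example one factoring through a finite quotient of $\Gamma_{p_n+1}$. Require $\|\pi(\rho(\vec q))\|>\|\rho(\vec q)\|-2^{-j-1}$ for each of the finitely many queries $(\rho,j)$ already answered. Such $\pi$ exists by residual finite dimensionality, and the search halts because strict inequalities between computable reals are c.e.
\item Set $p_k:=\pi(q_k)$ for all $k$.
\end{itemize}
This is uniform in $n$. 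For $n\in R$ it makes $\boldB_n$ finite dimensional, so $K_0(\boldB_n)$ is free, $K_1(\boldB_n)=0$, and $\boldB_n$ is still residually finite dimensional. Your torsion-counting argument for $K_1(\boldB)\cong\bigoplus_{n\notin R}(\mathbb{Z}\oplus\mathbb{Z}/p_n\mathbb{Z})$ then goes through unchanged.

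\paragraph{The rest of the proposal.} The rest of your proposal is sound.
\begin{itemize}
\item Your direct order-counting argument avoids having to identify $K_1(\boldB)$ with $G$.
\item You correctly observe that $\widetilde{S\boldB}\oplus\boldB$ is not unital, because the $c_0$-sum $\boldB$ is not. This fixes an oversight in the paper. Passing to $\widetilde{\boldE}$ only adds a $\mathbb{Z}$ summand to $K_0$ and leaves $K_1$ unchanged.
\item Your residual-finite-dimensionality argument is a clean way to get stable finiteness of all the unitizations involved.
\end{itemize}
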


\section*{Acknowledgments} \label{sec:ack}
The results of this note were obtained during the authors' visit to the American Institute of Mathematics (AIM) in January 2026 as part of their SQuaREs program.  The authors thank AIM for their hospitality and for creating a wonderful working environment during the writing of this paper.

\bibliographystyle{amsplain}
\bibliography{paperbib}
\end{document}